\documentclass[12pt]{article}

\usepackage[a4paper]{geometry}
\setlength{\textwidth}{6.3in}
\setlength{\textheight}{8.7in}
\setlength{\topmargin}{0pt}
\setlength{\headsep}{0pt}
\setlength{\headheight}{0pt}
\setlength{\oddsidemargin}{0pt}
\setlength{\evensidemargin}{0pt}

\usepackage[colorlinks=true,citecolor=black,linkcolor=black,urlcolor=blue]{hyperref}
\usepackage{amsmath,amssymb,amsthm}
\usepackage{bbm}
\usepackage{enumerate}
\usepackage{ytableau}
\usepackage{amsfonts}
\usepackage{graphicx}
\usepackage{caption}
\usepackage{subfig}
\usepackage{thmtools}
\usepackage{thm-restate}
\usepackage{cleveref}

\def\Ker{\operatorname{Ker}}

\def\Hilb{\operatorname{Hilb}}
\def\Hom{\operatorname{Hom}}

\def\S{\mathcal{S}}
\def\W{\mathcal{W}}
\def\a{\mathbf{a}}
\newcommand{\R}{\mathcal{R}}
\newcommand{\C}{\mathbb{C}}
\DeclareMathOperator{\im}{Im}
\DeclareMathOperator{\Span}{Span}
\providecommand{\href}[2]{#2}

\newtheorem{lemma}{Lemma}[section]
\theoremstyle{definition}
\newtheorem{definition}{Definition}[section]

\author{Daniel Mulcahy\thanks{Supported by The Hamilton Mathematical Trust.}\\
\small School of Mathematics\\[-0.8ex]
\small Trinity College Dublin\\[-0.8ex] 
\small Dublin, Ireland\\
\small\tt mulcahda@tcd.ie\\
\and
Peter Phelan\thanks{Supported by The Hamilton Mathematical Trust.}\\
\small School of Mathematics\\[-0.8ex]
\small Trinity College Dublin\\[-0.8ex]
\small Dublin, Ireland\\
\small\tt phelanpe@tcd.ie }

\title{Counting Dimensions of Tangent Spaces to Hilbert Schemes of Points}
\date{\today}

\begin{document}
\maketitle
\begin{abstract}
In this paper we prove two results which further classify smoothness properties of Hilbert schemes of points. This is done by counting classes of arrows on Young diagrams corresponding to monomial ideals, building on the approach taken by Jan Cheah to show smoothness in the 2 dimensional case. We prove sufficient conditions for points to be smooth on Hilbert schemes of points in 3 dimensions and on nested schemes in 2 dimensions in terms of the geometry of the Young diagram. In particular, we proved that when the region between the two diagrams at a point of the nested scheme is rectangular, the corresponding point is smooth. We also proved that if the three dimensional Young diagram at a point can be oriented such that its horizontal layers are rectangular, then the point is smooth.
\end{abstract}

\newpage

\section{Introduction}
The Hilbert scheme of $n$ points in $d$ dimensions, denoted $\Hilb^n(\C^d)$ is an $nd$-dimensional scheme parametrising all ideals of codimension $n$ in the polynomial ring $\C[x_{1}, \dots, x_{d}]$.
\[\Hilb^n(\C^d) = \{I \subset \C[x_1, \dots, x_d] \mid \dim_{\C} \C[x_1, \dots, x_d]/I = n\}\]
In general $\Hilb^n(\C^d)$ is not smooth and we are interested in characterizing smooth points and singularities.

In this paper, we shall use methods based on counting arrows in visual diagrams to prove two results providing sufficient
conditions for smoothness of points on certain variants of Hilbert schemes.
\\

First we shall see in section \ref{torus} that, by considering the action of the algebraic torus group rescaling variables of
$\C[x_1,\dots,x_n]$, we may focus our investigation specifically on the \emph{monomial} ideals in $\Hilb^{n}(\C^d)$.
In particular, if every monomial ideal is a smooth point, then every point is smooth.

Let $I$ be a monomial ideal in $R = \mathbb{C}[x_1, \dots x_d]$. The monomials of $R\setminus I$ form a basis of $R/I$.
With axes corresponding to exponents of $x_i$, these monomials can be represented as a Young diagram (or its higher dimensional analogue),
i.e. the presence of one square implies the presence of all those nearer to the origin.
In section \ref{firstsec} we shall see that the tangent space at $I$ is isomorphic to $\Hom(I,R/I)$ and show how in $2$ dimensions
a basis of that space can be represented by arrows on the Young diagram. We shall then demonstrate how this representation
can be used to easily count the dimension of the tangent space, reproducing the known result that $\Hilb^{n}(\mathbb{C}^{2})$ is smooth for every $n$.

In section \ref{sec2} we look at the nested Hilbert scheme of $(n,m)$ points for $n \geq m$ in two dimensions, denoted $\Hilb^{n,m}(\mathbb{C}^{2})$. This is a $2n$-dimensional scheme which parametrises pairs of nested ideals of codimension $n$ and $m$ respectively.
$$ \Hilb^{n,m}(\mathbb{C}^{2}) = \{ (I,J) \ |\ I \subset J,  \ \dim \mathbb{C}[x,y]/I = n, \ \dim \mathbb{C}[x,y]/J = m\} $$
Using a generalization of the method used by Cheah \cite[2.6]{nested_schemes} to prove smoothness in the $m=n-1$ case,
we shall prove the following by considering arrows on the two Young diagrams:
\begin{restatable}{thm}{nested}
	\label{thm:nested} Let $(I,J)$ be a point on $\Hilb^{n,m}(\mathbb{C}^2)$ such that $I,\ J$ are monomial ideals and the monomials of $J \setminus I$ form a rectangle $\R$ in the Young diagram representation. Then the tangent space at $(I,J)$ has dimension $2n$.
\end{restatable}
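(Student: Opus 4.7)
The plan is to characterise $T_{(I,J)}\Hilb^{n,m}(\C^2)$ as the space of compatible homomorphism pairs, then apply an arrow-counting argument generalising Cheah's approach. A first-order deformation of $(I,J)$ is a pair of ideals $I_\epsilon \subset J_\epsilon \subset R[\epsilon]/(\epsilon^2)$, flat over $\C[\epsilon]/(\epsilon^2)$ and reducing to $(I,J)$ modulo $\epsilon$. Via the identification $T_I \Hilb^n \cong \Hom(I, R/I)$ from section~\ref{firstsec}, applied separately to $I$ and to $J$, this yields pairs $(\phi, \psi) \in \Hom(I, R/I) \oplus \Hom(J, R/J)$, and a direct check shows that $I_\epsilon \subset J_\epsilon$ translates into the compatibility $\pi \circ \phi = \psi|_I$, where $\pi : R/I \twoheadrightarrow R/J$ is the natural projection. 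Thus
\[ T_{(I,J)} = \{(\phi,\psi) \in \Hom(I,R/I) \oplus \Hom(J,R/J) : \pi \circ \phi = \psi|_I\}. \]
Introducing $\sigma(\phi,\psi) = \pi \circ \phi - \psi|_I$, whose kernel is $T_{(I,J)}$, we obtain
\[ \dim T_{(I,J)} = 2n + 2m - \dim \im(\sigma), \]
using $\dim \Hom(I,R/I) = 2n$ and $\dim \Hom(J,R/J) = 2m$ from Cheah's arrow count in section~\ref{firstsec}. The theorem therefore reduces to proving $\dim \im(\sigma) = 2m$.

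Let $\lambda_I$ and $\lambda_J$ denote the Young diagrams of $R/I$ and $R/J$, so that $\R = \lambda_I \setminus \lambda_J$. My plan is to construct an explicit basis of $T_{(I,J)}$ of size $2n$ using arrows. The arrow basis of $\Hom(I,R/I)$ consists of $2n$ arrows on $\lambda_I$, each with a target cell in $\lambda_I$; we partition these by whether the target lies in $\R$ or in $\lambda_J$. The $2(n-m)$ arrows with target in $\R$ give homs $\phi$ satisfying $\pi \circ \phi = 0$, so $(\phi,0) \in T_{(I,J)}$ trivially, contributing $2(n-m)$ independent tangent vectors. For each of the remaining $2m$ arrows on $\lambda_I$ (target in $\lambda_J$), the rectangularity of $\R$ should allow me to pair it uniquely with an arrow in the $2m$-arrow basis of $\Hom(J,R/J)$, producing a compatible pair. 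Together these give $2n$ linearly independent compatible pairs (distinct supports in $\phi$ and $\psi$), and spanning follows by an adjustment argument: given any compatible pair, first expand its $\psi$-component in the arrow basis of $\lambda_J$ to reduce to $\psi=0$, then expand the remainder in the first family.

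The main obstacle is constructing and verifying this arrow pairing in the second family, and here the rectangular hypothesis is essential. Removing the $w \times h$ rectangle $\R$ from $\lambda_I$ has a clean effect on ideal generators: exactly one new minimal generator of $J$ appears at the bottom-left corner $(a,b)$ of $\R$, while the old generators of $I$ along the top and right edges of $\R$ become non-minimal in $J$ through divisibility by $x^w$ or $y^h$ times this new generator. Tracking arrows through these controlled changes of generators is precisely what allows the bijection between target-in-$\lambda_J$ arrows on $\lambda_I$ and arrows on $\lambda_J$ to be well-defined and single-valued. If $\R$ were not rectangular, removing it would produce multiple uncontrolled new minimal generators of $J$, the arrow bijection would fail, and $\dim T_{(I,J)}$ could strictly exceed $2n$, reflecting a genuine singularity.
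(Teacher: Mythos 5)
Your reduction to $\dim\im(\sigma)=2m$ is sound and matches the paper's setup (the fibre-product description of the tangent space and the exact sequence), but the arrow-counting plan that is supposed to deliver this has a genuine gap: the partition of the $2n$ basis arrows of $\Hom(I,R/I)$ into $2(n-m)$ arrows ``with target in $\R$'' and $2m$ arrows ``with target in $\lambda_J$'' is false in general, and with it the claim that $\dim\Ker(\pi\circ -)=2(n-m)$. A cell of $\R$ can be the head of more than two basis arrows. Concretely, take $I=(x^4,x^2y,y^2)$, $J=(x^2,y^2)$, so $n=6$, $m=4$ and $\R=\{x^2,x^3\}$ is a $2\times 1$ rectangle. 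The six arrows $x^4\to x^2$, $x^4\to x^3$, $x^2y\to x^2$, $x^2y\to x^3$, $y^2\to x^2$, $y^2\to x^3$ all define (independent, single-arrow) elements of $\Hom(I,R/I)$ killed by $\pi$, so $\dim\Ker(\pi\circ -)=6>2(n-m)=4$; correspondingly only $2n-6=6<2m=8$ basis elements remain for your second family, so the proposed bijection with the $2m$-element basis of $\Hom(J,R/J)$ cannot exist. The correct decomposition here is $6+6$, not $4+8$. (Relatedly, pushing a basis morphism $\langle\a\rangle_I$ forward to $\Hom(I,R/J)$ does not generally land in the image of $\Hom(J,R/J)$ ``with the same arrows'': the dragging constraints change because the generators of $J$ differ from those of $I$ near $\R$, so the ``unique pairing'' is not single-valued even when the counts happen to agree.)

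The excess $\dim\Ker(\pi\circ-)-2(n-m)$ is exactly the quantity the paper's central combinatorial lemma controls: it equals the codimension of $\im(\pi\circ -)$ inside the part of $\Hom(I,R/J)$ not hit from $\Hom(J,R/J)$-side arrows, and the paper proves this by a bijection, height by height, between ``kernel-type'' and ``$G$-type'' translates of the rectangle $\R$ along the boundary of the Young diagram (with $\min\{d,w\}$ of each at every height). That cancellation --- not a termwise matching of basis arrows --- is where the rectangular hypothesis actually enters, and it is the step your proposal is missing. Your closing heuristic about the generators of $J$ (one new minimal generator at the bottom-left corner of $\R$, old generators along its edges becoming non-minimal) is correct but does not by itself repair the count.
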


In section $4$ we shall consider the three dimensional case $ \Hilb^{n}(\mathbb{C}^{3})$.
This is known not to be smooth (for $n \geq 4$), but we are interested in determining which monomial ideals are singular.
Considering the three dimensional version of a Young diagram, we shall use an analogue of the proof in $2$ dimensions to prove that
whenever all horizontal (up to interchanging variables/axes) layers of the Young diagram are rectangular, the corresponding ideal is a smooth point.
This geometric condition on the Young diagram can be restated in terms of the generators:
\begin{restatable}{thm}{cubic} 
	\label{thm:cubic} Let $I \subset R := \mathbb{C}[x,y,z]$ be an ideal generated by monomials of the form $x^iz^j$ and $y^iz^j$
	such that $R/I$ is a vector space of dimension $d$.
	Then $\dim \Hom_R(I,R/I)=3d$.
\end{restatable}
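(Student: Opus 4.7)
The plan is to extend the arrow-counting method of Section~\ref{firstsec} to three dimensions, exploiting the rectangular layer structure forced by the hypothesis. The tangent space at $I$ equals $\Hom_R(I, R/I)$, and the three-dimensional torus acts on this space, yielding a weight decomposition
$$\Hom_R(I, R/I) = \bigoplus_{\vec{w} \in \mathbb{Z}^3} \Hom_R(I, R/I)_{\vec{w}}.$$
A homogeneous element of weight $\vec{w}$ is specified by a scalar $c_g$ for each minimal generator $g$ of $I$, via $\varphi(g) = c_g \cdot g \cdot x^{w_1} y^{w_2} z^{w_3} \in R/I$, with $c_g$ forced to vanish if that monomial has a negative exponent or lies in $I$, and subject to the constraints coming from syzygies between generators.

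Under the hypothesis, the horizontal slice of the Young diagram at height $k$ is a rectangle of dimensions $a_k \times b_k$, with $a_k, b_k$ non-increasing in $k$, becoming empty at some height $N$. The minimal generators then split into three families: a top generator $z^N$; $x$-generators $x^{a_k} z^k$ at the levels where $a_k$ strictly decreases; and symmetric $y$-generators $y^{b_k} z^k$. The key structural feature is that, because every layer is rectangular, the lcm $x^{a_k} y^{b_k} z^k$ of any $x$- and $y$-generator at a common level already lies in $I$, so these two families never interact through a non-trivial syzygy in $R/I$. The $x$-family on its own (and symmetrically the $y$-family) looks like a 2D staircase of the kind treated in Section~\ref{firstsec}.

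Building on this, my plan is to construct, for each box $s$ in the Young diagram, three linearly independent elements of $\Hom_R(I, R/I)$: an ``$x$-arrow'' that sends the $x$-generator at the level of $s$ to the monomial $s$ and is propagated to the other $x$-generators via the syzygy $z\,\varphi(x^{a_k} z^k) = x^{a_k - a_{k+1}}\,\varphi(x^{a_{k+1}} z^{k+1})$; a ``$y$-arrow'' by symmetry; and a ``$z$-arrow'' that sends the top generator $z^N$ to $s$ and is propagated downward through the $z$-direction syzygies. Summing over the $d$ boxes gives $3d$ elements, whose linear independence can be read off by inspecting their values on the distinguished generators $x^{a_{k_0}}z^{k_0}$, $y^{b_{k_0}}z^{k_0}$, and $z^N$.

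The main obstacle will be verifying that these $3d$ arrows actually span all of $\Hom_R(I,R/I)$, not merely that they are linearly independent. In 2D this is guaranteed by a clean cancellation between consecutive staircase generators; in 3D one must additionally rule out unexpected contributions from weight spaces where the three families are coupled in unforeseen ways. The rectangular-layer hypothesis is precisely what keeps this under control: since the $x$- and $y$-sides cannot interact inside $R/I$, the full count decouples into three parallel coordinate-direction computations, each closely analogous to the 2D case of Section~\ref{firstsec}.
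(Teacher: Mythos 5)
Your general framework (torus weight decomposition, one basis element per box per ``coordinate direction'') is the right starting point, but the construction of your third family is incorrect, and the half of the argument you defer is where all the work lies. Concretely: it is not true that for every box $s$ there is a morphism sending the top generator $z^N$ to $s$. Take $I=(x^3,\,xz,\,z^2,\,y)$, so $d=4$ with basis $\{1,x,x^2,z\}$ and $N=2$. The syzygy $x\cdot z^2=z\cdot xz$ forces $x\,\varphi(z^2)=z\,\varphi(xz)$; if $\varphi(z^2)=1$ then the left side is $x\neq 0$ in $R/I$ while $\varphi(xz)$ would have to be $x/z$, which is not a monomial, so no morphism sends $z^2\mapsto 1$ (and likewise none sends $z^2\mapsto x$). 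Your scheme therefore produces at most $10<12$ elements here, and it entirely misses morphisms such as $xz\mapsto x^2$ of weight $(1,0,-1)$, whose weight is not of the form $s/z^N$ or $s/x^{a_k}z^k$ for any box $s$. The paper's third family is not anchored at $z^N$ alone: it consists of \emph{downward-pointing} arrows based at \emph{every} generator other than $x^q$ and $y^r$, counted by matching each morphism to a box lying directly below an upward-facing face of the layer structure; that is how each box gets counted exactly once.

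The second gap is that you explicitly postpone the spanning argument, but that is the theorem. The assertion that the $x$- and $y$-generator families ``never interact'' because their lcm lies in $I$ does not follow: the lcm of any two generators lies in $I$ trivially, and the syzygy $y^{b_j}z^{c}\varphi(x^{a_k}z^k)=x^{a_k}z^{c'}\varphi(y^{b_j}z^j)$ still imposes a genuine constraint (possibly killing $\varphi$, as in the example above) whenever one side is nonzero in $R/I$. Showing that each candidate arrow either generates a morphism or is killed, and that every weight space is accounted for, is exactly what the paper's dragging arguments do --- including the non-obvious lemma that a downward arrow which cannot be dragged to a lower generator directly cannot reach one by first being dragged upward, which uses the rectangularity of the layers in an essential way. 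Without an analogue of that lemma and a classification of all arrows into the three types, the count is not established.
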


\section{Singularities on Hilbert Schemes of Points}
\label{firstsec}
\subsection{Action of the Algebraic Torus}
\label{torus}
The algebraic torus group $T = (\mathbb{C}^{*})^{d} $ acts on $\C[x_1,\dots, x_d]$ by rescaling variables.
\[(t_1,\dots,t_d)\cdot f(x_1,\dots,x_d)=f(t_1x_1,\dots,t_dx_d)\] This action preserves the colength of ideals, inducing an action on $\Hilb^{n}(\mathbb{\C}^{d})$.
For a compact normal variety $X$ with an action by $ T $, if the subset $ X^{T} \subset X $ of fixed points consists of smooth points, then $ X $ is a smooth variety; when $X$ is not smooth the dimensions of tangent spaces at the torus fixed points give a natural way to measure non-smoothness.
The fixed points of $T$ on $\Hilb^{n}(\C^d)$ are precisely the monomial ideals of codimension $n$, hence only monomial ideals are required to determine smoothness.
The details of this argument can be found in \cite{schemes}.

From \cite{nested_schemes} we know that the tangent space of $ \Hilb^{n}(\mathbb{\C}^{d}) $ at a point $I$ is isomorphic to $ \Hom_{R}(I,R/I) $.
Additionally with the action of $ (\mathbb{C}^{*})^{d} $ lifted to $ \Hom_{R}(I,R/I) $, we may find a weight basis by considering only those elements $ \mathcal{W} \subset \Hom_{R}(I,R/I) $ of pure weight, i.e. morphisms sending each generator of $I$ to $0$ or an element of the basis $\S$ of $R/I$ represented by the monomials of $R\setminus I$.

\subsection{Young Diagram Representation}
We shall represent monomials $x_{1}^{i_{1}} \dots x_{d}^{i_{d}}$ by the corresponding boxes at $(x_i)$ on the $x_{1}, \dots, x_{d}$ hyperplane.
This hyperplane is divided into three regions: the region $\S$ of monomials in $R \setminus I$ which forms a Young diagram, the region of monomials in $I$ we call $\mathcal I$, and
the region we call $\mathcal{Z}$ below the $x_{i}$ axes, i.e. consisting of $x_{1}^{i_{1}} \dots x_{d}^{i_{d}}$ where some $i_k <0$.
\begin{definition}
	An \emph{arrow} $\alpha \to \beta$ on the Young diagram is a pair $(\alpha,\beta) \in \mathcal{I} \times (\S \cup \mathcal{Z})$, visually represented on the diagram as an arrow pointing from $\alpha$ to $\beta$.
\end{definition}
\begin{definition}
	Given some arrow $\mathbf{a} = \alpha \to \beta$, we may obtain another arrow $\alpha' \to \beta'$ by simultaneously moving the head and tail 1 block in any direction as long as $\alpha' \in \mathcal{I} $ and $\beta' \in \S \cup \mathcal{Z}$. If we repeat this step several times to obtain an arrow $\mathbf{a}'$, then we say $\mathbf{a}$ is \emph{dragged to} $\mathbf{a}'$.
\end{definition}
We may assume that such a dragging happens along the boundary of $\S$ i.e. keeping the tail directly or diagonally adjacent to a block in $\S$.
Let $ \{ \alpha_{i} \}_{i \in J} $ be the canonical generators of $I$.
A morphism $f \in \mathcal{W}$ can be viewed as a collection of arrows $\alpha_i \to f(\alpha_i)$ for those $f(\alpha_{i}) \neq 0$.
Suppose that for two generators $ \alpha_{i}, \alpha_{j}$ there exist monomials $ m,n $ such that $ m \cdot \alpha_{i} = n \cdot \alpha_{j}$;
then since $f$ is an $R$-module homomorphism this implies that $ m \cdot f(\alpha_{i}) = n \cdot f(\alpha_{j}) $.
Hence for any monomial $m$ and $\beta \in \S$, if $m\beta \neq 0$ (in $R/I$) then any morphism containing the arrow $\alpha_i \to \beta$ must also contain the arrow $\alpha_j \to m\beta /n$.
In particular, if $m\beta/n$ contains negative powers of $x$ or $y$, the arrow $\alpha_i \to \beta$ can never occur in a morphism. 
Geometrically this means that if an arrow from $\alpha_i$ can be dragged to $\alpha_j$,
then those arrows must appear together in any morphism, and if the head of one is beyond some axis then the other can never appear.
Any collection of arrows from generators to $S$ satisfying these properties represents an element of $\W$.
For some arrow $\a$ from $\alpha_i$ to $\beta \in \S$, either there exists some morphism $\langle \a \rangle \in \W$ containing $\a$
that sends the largest possible number of generators to $0$, and thus consists only of arrows that $\a$ can be dragged to, or no such morphism exists in which case we say $\langle\a\rangle=0$.
Then $\W$ is generated by the set $\mathcal{T}$ of morphisms of the form $\langle\a\rangle$, and $\mathcal{T}$ is a basis for $\Hom(I,R/I)$. \\

\subsection{2-Dimensional Case}
Here we present a restatement of the proof from \cite{nested_schemes} in two dimensions, to introduce the type of approach we shall take to obtain our own results.

In the two dimensional case, there are three different kinds of arrows distinguished by which interval their direction (calling positive $x$ direction right, and positive $y$ up) belongs to:
\begin{enumerate}
	\item Arrows with directions in [left, up).
	\item Arrows with directions in (down, left).
	\item Arrows with directions in (right, down]
\end{enumerate}
We label the generators $\alpha_{0} \dots \alpha_{m} $ from bottom to top.
$$
\begin{array}{c}
\begin{ytableau}
\none[\alpha_{3}] & \none & \none & \none & \none  \\
&  & \none  &  \none & \none & \none    \\
&  & \none[\alpha_{2}]  &  \none & \none & \none   \\
&  &  &  &  &  \none[\alpha_{1}]   \\
&  &  &  &  &  &  \none[\alpha_{0}]  
\end{ytableau}\\
\\
\S
\end{array}
$$
\begin{lemma}
	Arrows of the second kind can never occur in a morphism.
\end{lemma}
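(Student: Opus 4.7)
The plan is to show that any arrow of the second kind can be dragged to one whose head lies in $\mathcal{Z}$; by the discussion preceding the lemma this forces it to be absent from every morphism. Write the generators as $\alpha_j = x^{a_j} y^{b_j}$ with $a_0 > a_1 > \cdots > a_m = 0$ and $0 = b_0 < b_1 < \cdots < b_m$, and consider a type 2 arrow $\a : \alpha_i \to \beta$ with $\beta = (c,d)$, so that $c < a_i$ and $d < b_i$. Since no lattice point lies strictly below $\alpha_0 = (a_0,0)$, the tail must satisfy $i \geq 1$.

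First I would drag the tail along the staircase boundary of $\mathcal{I}$ from $\alpha_i$ to $\alpha_{i-1}$: slide right $a_{i-1}-a_i$ steps to $(a_{i-1}, b_i)$, then down $b_i-b_{i-1}$ steps to $\alpha_{i-1}$. The intermediate tails $(a_i+k, b_i)$ and $(a_{i-1}, b_i-k)$ sit above $\alpha_i$ and $\alpha_{i-1}$ respectively, so they remain in $\mathcal{I}$. The intermediate heads are $(c+k, d)$ on the horizontal leg and $(c + a_{i-1}-a_i,\, d-k)$ on the vertical leg; from $c + k < a_{i-1}$ and $d-k \leq d < b_i$ one checks that no generator $\alpha_j$ can dominate these points — those with $j < i$ have $a_j \geq a_{i-1}$, those with $j \geq i$ have $b_j \geq b_i$ — so the head avoids $\mathcal{I}$. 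It therefore lies in $\S$ so long as its $y$-coordinate is non-negative and slips into $\mathcal{Z}$ as soon as it is not.

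Iterating down the staircase, either at some stage the head's $y$-coordinate drops below zero, putting the head in $\mathcal{Z}$ and completing the argument, or the drag reaches $\alpha_0 = (a_0, 0)$, in which case the displacement $(c - a_i, d - b_i)$ is preserved and the final head sits at $y$-coordinate $d - b_i < 0$, again in $\mathcal{Z}$. Either way $\a$ has been dragged to an arrow with head in $\mathcal{Z}$, so by the characterization given before the lemma $\a$ cannot occur in any morphism.

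I expect the main technical obstacle to be the case-check that intermediate heads never sneak into $\mathcal{I}$: the argument ultimately rests on the geometric observation that the rectangular strip strictly below and to the left of the travelling tail — which is where the travelling head lives — always stays inside $\S \cup \mathcal{Z}$, because the generators that could potentially cover it lie either strictly to the right of or strictly above the head at every step of the descent.
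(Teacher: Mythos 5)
Your proposal is correct and follows essentially the same route as the paper: drag the down-and-left arrow along the staircase boundary, around each corner, until its tail reaches $\alpha_0$, at which point the preserved displacement $d-b_i<0$ places the head in $\mathcal{Z}$. The paper states this in one line; your version merely supplies the (correct) verification that the intermediate tails stay in $\mathcal{I}$ and the intermediate heads stay out of $\mathcal{I}$.
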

\begin{proof}
	These arrows can always be dragged around corners so can be dragged to $\alpha_0$ where their head will be below the axis.
\end{proof}
Let $ p_{i} $ be the vertical distance between $ \alpha_{i}$ and $\alpha_{i+1} $, and be $ q_{i} $ the horizontal distance between $ \alpha_{i}$ and $\alpha_{i-1} $.
For some generator $ \alpha_{i}$, define $ P_{\alpha_{i}} = \{ \beta \in B \ | \ \beta \ \text{lies to the left of} \ \alpha_{i}, \ y^{p_{i}} \beta \in I \} $ and $ Q_{\alpha_{i}} = \{ \beta \in B \ | \ \beta \ \text{lies lower than } \ \alpha_{i}, \ x^{q_{i}} \beta \in I \} $.
The morphisms in $\mathcal{T}$ with arrows of the first kind are $ f_{\alpha,\beta} = \langle \alpha \to \beta\rangle$ where $\beta \in P_{\alpha} $.
In this case $f_{\alpha,\beta}$ takes all generators above $\alpha$ to $0$.
Similarly arrows of the third kind generate morphisms $ f_{\alpha,\beta} $ such that $\beta \in Q_{\alpha} $.

As in \cite{nested_schemes}, it suffices to show that $$ | \mathcal{T} | = \sum_{\alpha}|P_{\alpha}| + \sum_{\alpha}|Q_{\alpha}| = n + n = 2n $$ 
To count the elements of $ P_{\alpha_{i}}$, we count all squares within the vertical distance $ p_{i} $ of the top of each column, which is the same as counting every square within the vertical interval from $\alpha_i$ (inclusive) to $\alpha_{i+1}$.
This amounts to counting every square in every vertical interval of the diagram, giving $n$ unique arrows.
Considering instead the rows and horizontal distances $ q_{i} $, the same argument shows also that $ \left|Q_{\alpha}\right|=n $.  \\
For example, consider $ I = (x^{6},x^{5}y,x^{2}y^{2},y^{4}) = (\alpha_{0},\alpha_{1},\alpha_{2},\alpha_3) $.
$$
\begin{array}{ccc}
\begin{ytableau}
\none[\alpha_{3}] & \none & \none & \none & \none  \\
\beta  & \beta & \none  &  \none & \none & \none    \\
&  & \none[ \alpha_{2} ]  &  \none & \none & \none   \\
&  & \beta & \beta & \beta &  \none[ \alpha_{1} ]   \\
&  &  &  &  & \beta &  \none[ \alpha_{0} ]  
\end{ytableau} &
\begin{ytableau}
\none[ \alpha_{3} ] & \none & \none & \none & \none  \\
\beta  & \beta & \none  &  \none & \none & \none    \\
&  & \none[ \alpha_{2} ]  &  \none & \none & \none   \\
&  & \beta & \beta & \beta & \none[ \alpha_{1} ]   \\
&  &  &  &  &  &  \none[ \alpha_{0} ]  
\end{ytableau} &
\begin{ytableau}
\none[ \alpha_{3} ] & \none & \none & \none & \none  \\
\beta & \beta & \none  &  \none & \none & \none    \\
\beta & \beta & \none[ \alpha_{2} ]  &  \none & \none & \none  \\
&  &  &  &  & \none[ \alpha_{1} ]  \\
&  &  &  &  &  &\none[ \alpha_{0} ]  
\end{ytableau}\\ \\
P_{\alpha_{0}} & P_{\alpha_{1}} & P_{\alpha_{2}} \end{array} $$
Counting the boxes $\beta$ for each $P_{\alpha_i}$ is equivalent to counting the vertical intervals
$$
\ytableausetup{boxsize=2em}
\begin{ytableau}
\none[\alpha_{3}] & \none & \none & \none & \none  \\
P_{\alpha_{2}} & P_{\alpha_{2}} & \none  &  \none & \none & \none    \\
P_{\alpha_{2}} & P_{\alpha_{2}} & \none[ \alpha_{2} ]  &  \none & \none & \none  \\
P_{\alpha_{1}} & P_{\alpha_{1}} & P_{\alpha_{1}} & P_{\alpha_{1}} & P_{\alpha_{1}} & \none[ \alpha_{1} ]   \\
P_{\alpha_{0}} & P_{\alpha_{0}} & P_{\alpha_{0}} & P_{\alpha_{0}} & P_{\alpha_{0}} & P_{\alpha_{0}} &  \none[ \alpha_{0} ]  
\end{ytableau}$$
\ytableausetup{boxsize=1.5em}
Similarly the $Q_{\alpha_i}$
$$
\begin{array}{ccc}
\begin{ytableau}
\none[ \alpha_{3} ] & \none & \none & \none & \none  \\
\beta & \beta & \none  &  \none & \none & \none    \\
\beta & \beta & \none[ \alpha_{2} ]  &  \none & \none & \none   \\
&  &  & \beta & \beta &  \none[ \alpha_{1} ]   \\
&  &  &  & \beta & \beta &  \none[ \alpha_{0} ]  
\end{ytableau}&
\begin{ytableau}
\none[ \alpha_{3} ] & \none & \none & \none & \none  \\
&  & \none  &  \none & \none & \none    \\
&  & \none[ \alpha_{2} ]  &  \none & \none & \none   \\
&  & \beta & \beta & \beta & \none[ \alpha_{1} ]   \\
&  &  & \beta & \beta & \beta &  \none[ \alpha_{0} ]  
\end{ytableau}&
\begin{ytableau}
\none[ \alpha_{3} ] & \none & \none & \none & \none  \\
&  & \none  &  \none & \none & \none    \\
&  & \none[ \alpha_{2} ]  &  \none & \none & \none  \\
&  &  &  &  & \none[ \alpha_{1} ]  \\
&  &  &  &  & \beta &\none[ \alpha_{0} ]  
\end{ytableau}\\
Q_{\alpha_{3}} & Q_{\alpha_{2}} & Q_{\alpha_{1}} 
\end{array} $$
can be counted as
$$
\ytableausetup{boxsize=2em}
\begin{ytableau}
\none[\alpha_{3}] & \none & \none & \none & \none  \\
Q_{\alpha_{3}} & Q_{\alpha_{3}} & \none  &  \none & \none & \none    \\
Q_{\alpha_{3}} & Q_{\alpha_{3}} & \none[ \alpha_{2} ]  &  \none & \none & \none  \\
Q_{\alpha_{3}} & Q_{\alpha_{3}} & Q_{\alpha_{2}} & Q_{\alpha_{2}} & Q_{\alpha_{2}} & \none[ \alpha_{1} ]   \\
Q_{\alpha_{3}} & Q_{\alpha_{3}} & Q_{\alpha_{2}} & Q_{\alpha_{2}} & Q_{\alpha_{2}} & Q_{\alpha_{1}} &  \none[ \alpha_{0} ]  
\end{ytableau}$$
\ytableausetup{boxsize=1.5em}

\section{Nested Hilbert Schemes}
\label{sec2}
We shall now apply the concepts discussed above to the case of nested Hilbert schemes
$$ \Hilb^{n,m}(\mathbb{C}^{2}) = \{ (I,J) \ |\ I \subset J,  \ \dim \mathbb{C}[x,y]/I = n, \ \dim \mathbb{C}[x,y]/J = m\}. $$
Let $I \subset J$ be monomial ideals.
Recall that the tangent spaces of $\Hilb^{n}(\mathbb{C}^{2}) $ and $\Hilb^{m}(\mathbb{C}^{2}) $ are isomorphic to $\Hom_{R}(I,R/I)$ and $\Hom_{R}(J,R/J)$ respectively.
Also note that for $I \subset J$, the natural embedding $ I \hookrightarrow J $ and projection $ R/I \to R/J $
induce the composition maps 
$$ \phi: \Hom_{R}(J,R/J) \to \Hom_{R}(I,R/J), \ \psi: \Hom_{R}(I,R/I) \to \Hom_{R}(I,R/J).$$
Now we define $$ (\phi - \psi):\Hom_{R}(I,R/I) \oplus \Hom_{R}(J,R/J) \to \Hom_{R}(I,R/J) $$ $$ (\phi - \psi)(f_{1},f_{2}) = \phi(f_{1}) - \psi(f_{2}) $$ From \cite[0.4]{nested_schemes} we know that the pullback $\Ker(\phi - \psi) $ of the maps $ \phi, \psi $ is isomorphic to the tangent space of $ \Hilb^{n,m}(\mathbb{C}^{2}) $ at $(I,J)$.

Given an arrow $\mathbf a=\alpha \to \beta$, where $\alpha$ is a generator of $I$ and $\beta$ a monomial in $R\setminus I$, we define the morphism $\langle \mathbf a \rangle_I$
(resp. $\langle \mathbf a \rangle_{I,J}$) generated by $\mathbf a$ in $\Hom(I,R/I)$ (resp. $\Hom(I,R/J)$) to be the morphism that takes the largest number of canonical generators of $I$ to $0$; if no morphism contains the arrow we say $\langle \mathbf a \rangle_I=0$
(resp. $\langle \mathbf a \rangle_{I,J}=0$). Denote $\alpha,\beta$ by $t(\mathbf a), h(\mathbf{a})$ respectively.

\nested*
\begin{proof}
	Label the generators of $I$ found above the rectangle counting downwards $\alpha_0, \dots, \alpha_k$ and those below counting upwards $\beta_0, \dots, \beta_l$
	The tangent space is isomorphic to $\Ker (\psi \oplus 1 - 1 \oplus \phi) \subset \Hom(I,R/I)\oplus \Hom(J,R/J)$, where $\psi : \Hom(I,R/I) \to \Hom(I,R/J)$ and $\phi : \Hom(J,R/J) \to \Hom(I,R/J)$
	are induced by $I \hookrightarrow J$, $R/I \twoheadrightarrow R/J$.
	The Young diagram of monomials $S_I$ in $R/I$ contains the monomials $S_J$ of $R/J$. Call the boundary curve of $S_I$ (extending along the axes as shown in figure \ref{fig:bounds}) $B_I$, and that of $S_J$ $B_J$.
	We say the region on the same side of one of these boundary curves as the origin is below it, and the region on the other side is above it.
	Morphisms in $\Hom(I,R/J)$ are those whose arrows $A$ satisfy the condition that, if an arrow in $A$ can be dragged to an arrow from another generator while keeping its head below $B_J$ and tail above $B_I$,
	then that arrow must be in $A$.
	It suffices to consider only the morphisms $W$ consisting of arrows of a single length and direction.
	First we show that $\psi-\phi$ is surjective.
	\begin{figure}[ht]%
		\centering
		\subfloat[$B_I$]{{\includegraphics[width=0.4\textwidth]{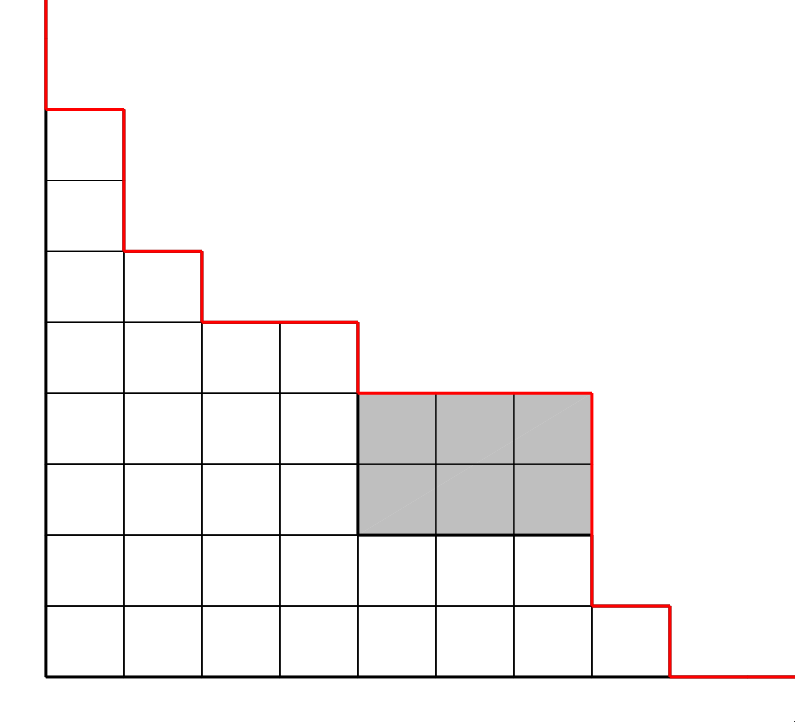} }}%
		\qquad
		\subfloat[$B_J$]{{\includegraphics[width=0.4\textwidth]{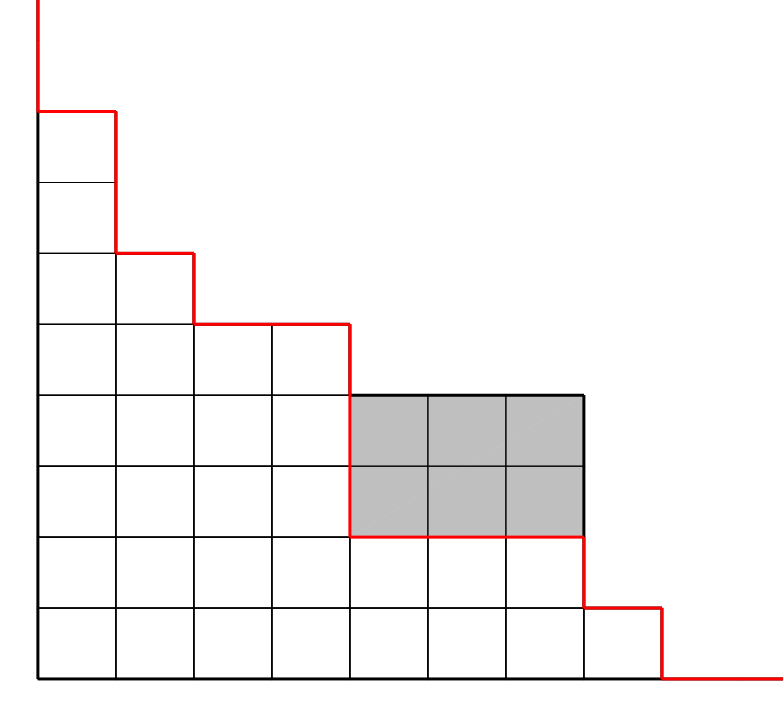} }}%
		\caption{the boundary curves}%
		\label{fig:bounds}%
	\end{figure}
	
	Morphisms in $\im \psi$ are those whose arrows $A$ satisfy the condition that,
	if an arrow in $A$ can be dragged to another generator keeping its head below and tail above $B_I$ (the head may go above $B_J$) then that arrow must be in $A$.
	If an arrow is dragged in this manner to another but cannot be dragged while keeping its head below $B_J$, then we say it is dragged through the rectangle $\R$.
	The remaining morphisms are those that do not satisfy this condition, i.e. they contain some arrow $\mathbf a$ that could be dragged through the rectangle to $\mathbf b$, but do not contain $\mathbf b$.
	We may assume $h(\mathbf a), h(\mathbf b)$ are directly to the left of or below $\R$.
	Consider such morphisms where $\mathbf a$ points downwards from the $\alpha$ generators and which are generated by $\mathbf a$. It suffices to consider the subset $G$ for which $h(\mathbf a)$ is to the left of $\R$:
	if it is below $\R$ and $h(\mathbf b)$ is to the left, then $\langle\mathbf b\rangle_{I,J} \in G$ and $\langle\mathbf a\rangle_{I,J}+\langle\mathbf b\rangle_{I,J} \in \im \psi$.
	Depending on the position of $\R$ in the Young diagram, $\alpha_k$ may lie directly above $\R$, and in this case no $f \in G$ 
	can contain an arrow from $\alpha_k$ as it would point down and to the left. Otherwise $\alpha_k$ is a canonical generator of
	$J$, as are $\alpha_0, \dots, \alpha_{k-1}$ in either case. Thus it is clear that any $f \in G$ can also be considered as
	a morphism in $\Hom(J, R/J)$ with the same arrows, so $f \in \im \phi$.
	The same analysis applies for the morphisms $G'$ obeying the mirrored condition ($\langle \mathbf a \rangle_{I,J}$ where $\mathbf a$ points upwards from the $\beta$ generators to directly below $\R$ and could be dragged up through $\R$), showing $G' \subset \im \phi$.
	
	From our definition of $G$ and $G'$, we have $$\Hom(I,R/J) = \im \psi \oplus \Span\{ G\} \oplus \Span\{ G'\}$$ and thus $\psi - \phi$ is surjective.
	\begin{figure}[ht]
		\begin{minipage}{.5\textwidth}
			\centering
			\includegraphics[width=0.8\textwidth]{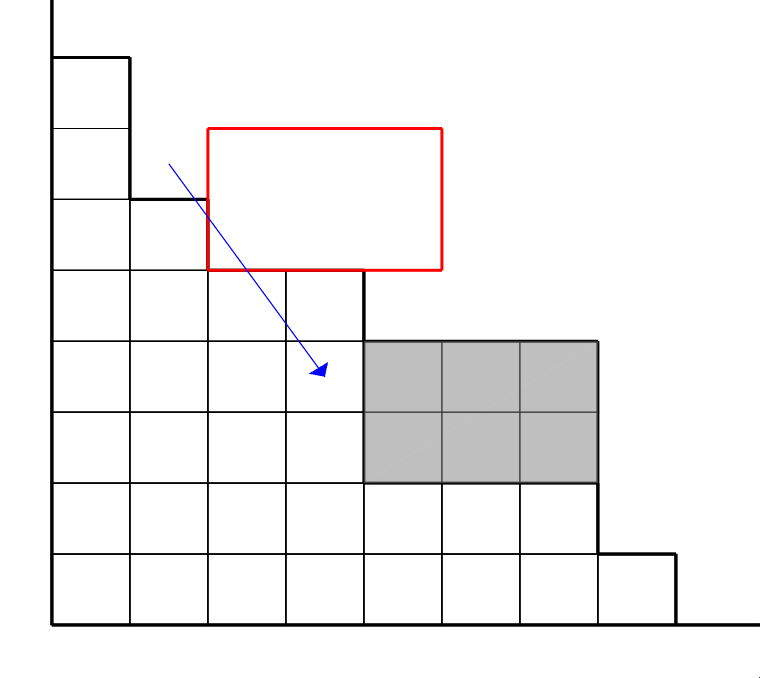}
			\caption{a $G$-type rectangle}
			\label{fig:cokrects}
		\end{minipage}
		\begin{minipage}{.5\textwidth}
			\centering
			\includegraphics[width=0.8\textwidth]{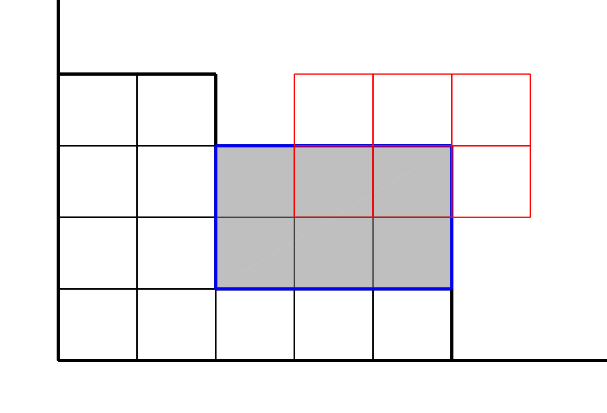}
			\caption{extra $G$-type rectangles}
			\label{fig:poss}
		\end{minipage}
	\end{figure}
	
	Consider the rectangle $\R'$ obtained by moving $\R$ back along $\mathbf a$ such that the position of $\R'$ with respect to $t(\mathbf a)$ is the same as the position of $\R$ with respect to $h(\mathbf a)$, as in figure \ref{fig:cokrects}.
	Dragging the arrow through $\R$ is equivalent to dragging the tail of the arrow above $B_I$ through $\R'$.
	The requirement that $\mathbf a$ cannot be dragged to $\mathbf b$ without going through $\R$ means that the bottom left corner $C_{\R'}$ cannot be above $B_I$.
	The requirement that $h(\mathbf a)$ be directly to the left of $\R$ and the $h(\mathbf b)$ be directly below means that the top side $T_{\R'}$ and right side $R_{\R'}$ of $\R'$ must be above $B_I$.
	In fact, every rectangle $\R'$ (of equal size to $\R$) satisfying these three properties that occurs higher than $\R$
	corresponds uniquely in this manner to an arrow $\mathbf a$ (with $t(\mathbf a)$ the first generator to the left of $\R'$) such that
	$\langle\mathbf a\rangle_{I,J} \in G$ unless $\langle\mathbf a\rangle_{I,J}=0$. We call these rectangles $G$-type.
	$\langle\mathbf a\rangle_{I,J}=0$ only if $h(\mathbf a)$ is to the left of $t(\mathbf a)$, which only occurs for the $n-m$ rectangles $\R'$ for which $C_{\R'}$ lies in $\R$ (including on the boundary)
	i.e. the bottom-left-most square of $\R'$ is one of those shown in red in figure \ref{fig:poss}.
	
	\begin{figure}
		\centering
		\includegraphics[width=0.4\textwidth]{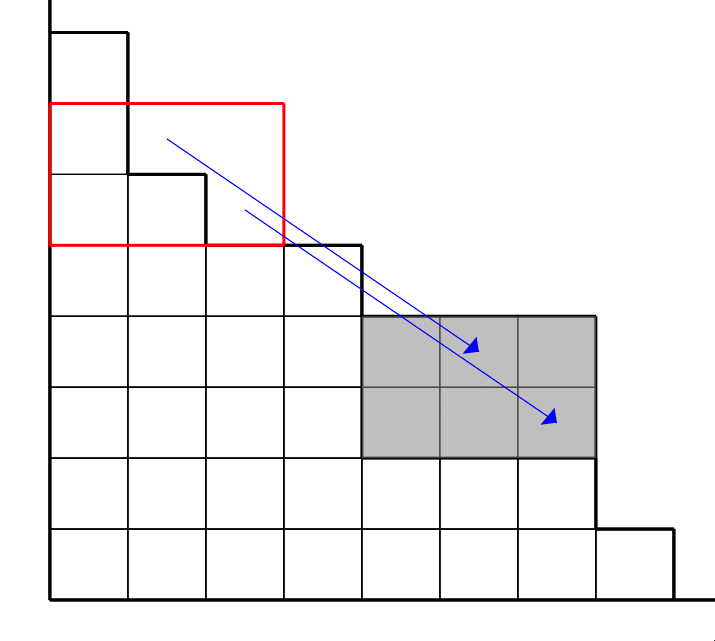}
		\caption{a kernel-type rectangle}
		\label{fig:kerrects}
	\end{figure}
	Morphisms in $\Ker \psi$ are those consisting solely of arrows with heads in $\R$, and are generated by $\langle \mathbf a \rangle_I$ where $\mathbf a$ cannot be dragged out of $\R$.
	Dragging $\R$ back along $\mathbf a$ (or equivalently any other arrow of $\langle \mathbf a \rangle_I$) gives a rectangle $\R'$
	such that $T_{\R'}$ and $R_{\R'}$ intersect $B_I$ (as $t(\mathbf a)$ cannot be dragged out of $\R'$) and the upper-right corner $D_{\R'}$ is above $B_I$ (as $t(\mathbf a)$ is contained in $\R'$).
	Any rectangle $\R'$ with these properties (kernel-type) corresponds uniquely in this fashion to a morphism in $\Ker \psi$ consisting of arrows with tails at each generator found in $\R'$.
	Every such rectangle must occur either lower or farther left than $\R$, so we may first consider only those rectangles to the left.
	
	We claim that the number of $G$-type rectangles at a given height above $\R$ is equal to the number of kernel-type rectangles at that height (above $\R$).
	For our purposes a \emph{height} $h$ is a specific vertical interval of the same size as the rectangle's height, we label the row of the Young diagram at the top of this interval $\mathcal{T}_h$ and the row below the bottom $\mathcal{B}_h$
	If $\R_1$ is the leftmost $G$-type rectangle and $\R_2$ is the rightmost at a given height, then clearly every rectangle in between is also $G$-type.
	$C_{\R_2}$ must lie on $B_I$, so the bottom left corner of $\R_2$ lies immediately upwards and to the right of the rightmost square in the row below $\R_2$.
	$\R_1$ being leftmost means either the bottom of $R_{\R_2}$ or the left end of $T_{\R_2}$ is 1 square to the right of $B_I$ (whichever occurs farther right), as shown in figure \ref{fig:cokt}.
	In the first case, the distance from $\R_1$ to $\R_2$ is the width $w$ of $\R$; otherwise it is the horizontal distance $d$ from the end of $\mathcal{T}_h$ to the end of $\mathcal{B}_h$.
	Thus the number of $G$-type rectangles at this height is $\min \{d,w\}$ \\
	\begin{figure}[ht]%
		\centering
		\subfloat{{\includegraphics[width=0.4\textwidth]{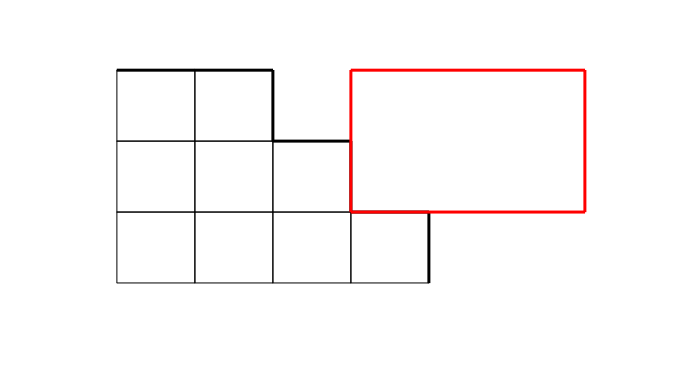} }}%
		\qquad
		\subfloat{{\includegraphics[width=0.4\textwidth]{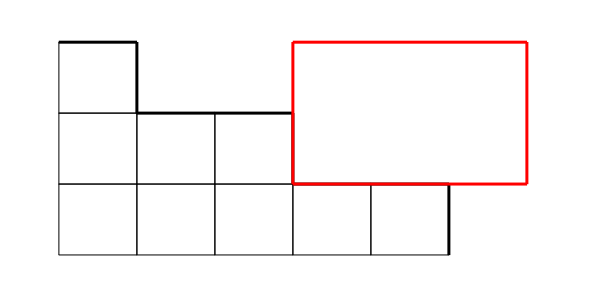} }}%
		\caption{2 cases for leftmost $G$-type rectangle}%
		\label{fig:cokt}%
	\end{figure}
	
	\begin{figure}%
		\centering
		\subfloat{{\includegraphics[width=0.4\textwidth]{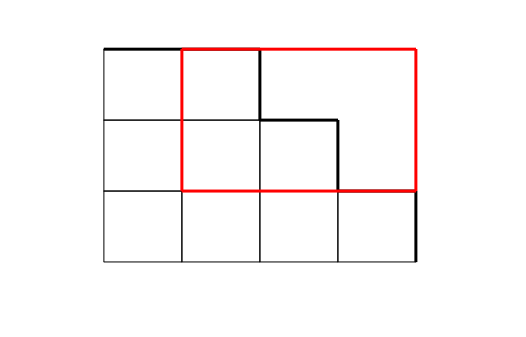} }}%
		\qquad
		\subfloat{{\includegraphics[width=0.4\textwidth]{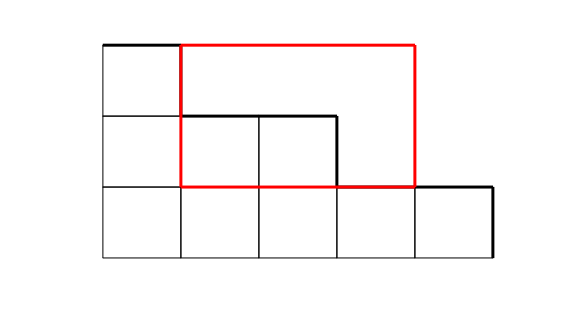} }}%
		\caption{2 cases for rightmost kernel-type rectangle}%
		\label{fig:kert}%
	\end{figure}
	
	If $\R_1'$ is the leftmost kernel-type rectangle and $\R_2'$ is the rightmost at a given height, then clearly every rectangle in between is also kernel-type.
	$D_{\R_1'}$ must be one square to the right of $B_I$, and either the left end of $T_{\R'_2}$ or the bottom of $R_{\R'_2}$ lies on $B_I$ (whichever occurs farther left) as shown in figure \ref{fig:kert}.
	In the first case, the distance from $\R_1$ to $\R_2$ is $w$, in the second it is $d$.
	Thus the number of kernel-type rectangles at this height is $\min \{d,w\}$ as required.
	Mirroring this argument we see the number of $G'$-type rectangles at a given horizontal position is the same as the number of kernel-type rectangles at the same horizontal position (to the right of $\R$).
	$G$-type rectangles excluding the $n-m$ described previously (which are also $G'$-type) are in bijection with morphisms of $G$, and the same is true of $G'$.
	As the kernel-type rectangles give a basis for $\Ker \psi$, we see that $\dim \Ker \psi = \dim \left(\langle G \rangle \oplus \langle G' \rangle\right)+2(n-m)$.
	
	From the exact sequence 
	\[0 \to \Ker (\psi - \phi) \to \Hom(I,R/I)\oplus \Hom(J,R/J) \to \Hom(I,R/J) \to 0\]
	we find 
	\begin{align*}
	\dim \Ker (\psi- \phi) &= \dim \Hom(J,R/J) + (\dim \Ker \psi + \dim \im \psi) - \dim \im \psi \oplus \langle G \rangle \oplus \langle G' \rangle\\
	&= 2m + \dim \Ker \psi - \dim \langle G \rangle \oplus \langle G' \rangle\\
	&= 2m + 2(n-m)\\
	&= 2n
	\end{align*}
\end{proof}
\subsection{Generalisations}
Let $(I,J) \in \Hilb^{n,m}(\mathbb{C}^2)$ monomial ideals and let $\R$ be the shape formed by the monomials of $J/I$ in the Young diagram representation. The above theorem provides a sufficient condition for $(I,J)$ to be a smooth point, but it is not necessary: for $J=(x,y)$ and $I=(x^2,y^2)$, $\Ker(\psi-\phi)$ is spanned by 8 morphisms, $ f^{I}_{x^2,x},f^{I}_{x^2,y},f^{I}_{x^2,xy},f^{I}_{y^2,x},f^{I}_{y^2,y},f^{I}_{y^2,xy},f^{J}_{x,1},f^{J}_{y,1} $ where $f^{I}_{\alpha,\beta} = \langle \alpha \to \beta \rangle_{I} $. $\R$ is not a rectangle here as shown in figure \ref{fig:ex2}. \\
\begin{figure}[h]
	\begin{minipage}{0.5\textwidth}
		\centering
		\includegraphics[width=0.3\textwidth]{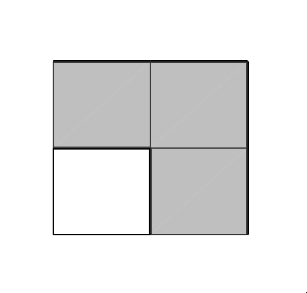}
		\caption{}
		\label{fig:ex2}
	\end{minipage}
	\begin{minipage}{0.5\textwidth}
		\centering
		\includegraphics[width=0.3\textwidth]{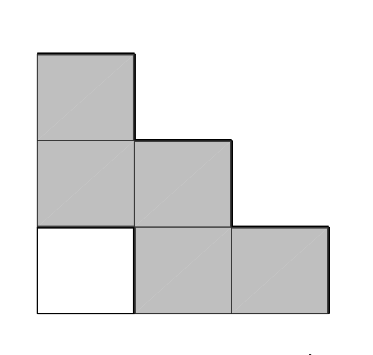}
		\caption{}
		\label{fig:ex1}
	\end{minipage}
\end{figure}
The statement is not true for $\R$ an arbitrary connected shape: we have as a counterexample figure \ref{fig:ex1}, $I=(x^3,x^2y,y^2x,y^3)$ and $J=(x,y)$. Upon computation, we see that $\R$ is connected but the dimension of the tangent space is not $12$.

\newpage

\section{Three Dimensional Case}
Let $R=\mathbb{C}[x,y,z]$, so we now have Young diagrams consisting of cubes in 3 dimensions.

\cubic*
\begin{proof}
	Label the canonical generators in two groups: $\alpha_0=z^p$ downwards in the $xz$ plane to $\alpha_m=x^q$, $\beta_0=\alpha_0$ downwards in the $yz$ plane to $\beta_l=y^r$.
	
	The absence of generators containing nonzero powers of $x$ and $y$ means that if we take the $z$ axis to be vertical, each horizontal layer of blocks in the diagram is rectangular.
	There are three types of arrows possible: firstly those arrows from the generators $\alpha_1, \dots, \alpha_k$ with heads having higher or equal powers of $z$,
	secondly arrows from the generators $\beta_1, \dots, \beta_l$ with the same condition,
	and thirdly all arrows from any generators (except $\alpha_k$, $\beta_l$) with heads having smaller powers of $z$.
	
	\begin{figure}[ht]
		\begin{minipage}{0.5\textwidth}
			\centering
			\includegraphics[width=0.8\textwidth]{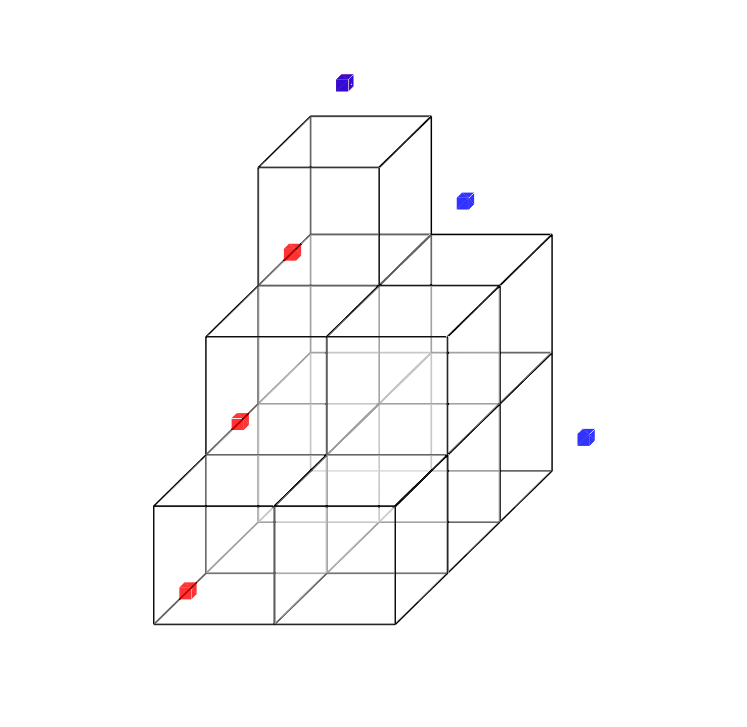}
			\caption{$\alpha$ generators in red and $\beta$ in blue}
			\label{fig:gens}
		\end{minipage}
		\begin{minipage}{0.5\textwidth}
			\centering
			\includegraphics[width=0.8\textwidth]{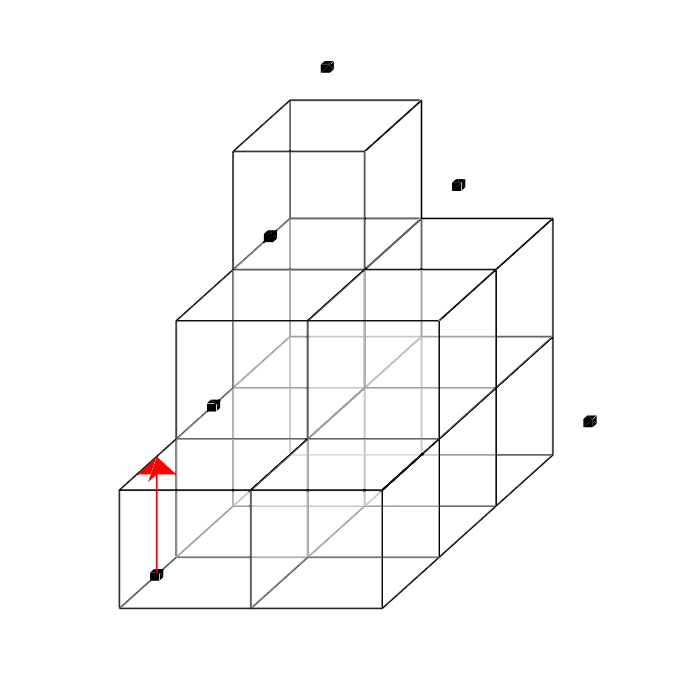}
			\caption{$p_0$}
			\label{fig:p}
		\end{minipage}
	\end{figure}
	
	Consider the first type of arrows.
	For each $\alpha_i$, $i=1, \dots, k$ let us count the morphisms $f$ consisting of these arrows such that $f(\alpha_i) \neq 0$ and $f(\alpha_j)=0$ for $j<i$ and $f$ is generated by an arrow from $\alpha_i$.
	Let $p_i$ be the vertical distance from $\alpha_i$ to $\alpha_{i-1}$.
	$f(\alpha_{i-1})=0$ means that we cannot drag the arrow at $\alpha_i$ to $\alpha_{i-1}$,
	therefore $z^{p_i}f(\alpha_i) \in I$.
	
	Given an arrow $\alpha_i\to \gamma$ of the first type satisfying $z^{p_i}\beta \in I$,
	$\gamma$ has higher or equal powers of $z$ (by arrow type) and $y$ (as $y \nmid \alpha_i$) than $\alpha_i$ so must have a lower power of $x$ as it lies inside the Young diagram.
	Let $\gamma=y^bz^c\alpha_i/x^a$,
	Dragging the arrow will produce arrows with tails at $m \in I$ and heads at $my^bz^c/x^a$.
	Clearly the only way to drag it past the axes is in the negative $x$ direction. This could only be done by dragging it
	to $\alpha_j$ for $j<i$, or to some $\beta_j$. The latter is impossible: $my^bz^c/x^a \in \S \implies m/x^a \in \S$, so
	unless the head is already past the axes, the tail will always be directly to the $x$ direction of a part of the Young diagram
	so cannot be dragged past the corner of any rectangular layer.
	But as $z^{p_i}\beta \in I$, the arrow cannot be dragged sufficiently far in the $z$ direction to reach $\alpha_{i-1}$,
	and the rectangular structure of the diagram means dragging the arrow in the $y$ direction does not reduce this required distance.
	
	Therefore each arrow from $\alpha_i$ to some element of $\S$ with a not lower power of $z$ that is at most $p_i$ blocks below the surface of the diagram
	generates a distinct morphism that is zero on each $\alpha_j, j<i$ and on all $\beta_j$.
	The number of these arrows is the same as the number of boxes in $\S$ contained in the height interval from $\alpha_i$ to $\alpha_{i-1}$,
	i.e.
	those whose powers of $z$ range from $a$ to $a+p_i-1$ if $a$ is the power of $z$ in $\alpha_i$.
	Counting these boxes means that every box will be counted exactly once when we count $\alpha_1, \dots, \alpha_k$, giving $d$ morphisms of this type.
	
	Morphisms corresponding to the second type of arrows are counted the same way interchanging $x$ and $y$, giving $d$ of them.
	
	For each generator (except $\alpha_k, \beta_l$) we count morphisms generated by arrows of the third type which take that generator to an element of $\S$ and all generators with lower powers of $z$ to zero.
	\begin{lemma} If an arrow of this type cannot be dragged to some lower generator directly (i.e.
		without dragging it upwards then down), then it cannot be dragged to a lower generator in any manner.
	\end{lemma}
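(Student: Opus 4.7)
I would prove the lemma by a \emph{de-oscillation} argument: any valid drag path of a third-type arrow $\mathbf{a}=g\to g+v$ (with $v_z<0$) to a lower generator $g'$ can be replaced by one whose tail $z$-coordinate is non-increasing. The plan is to take a drag path $P$ of $\mathbf{a}$ from $g$ to any lower generator using the minimum number of upward tail-moves, and argue by contradiction that this minimum must be zero.

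The first ingredient is the asymmetric monotonicity forced by the rectangular-slab hypothesis. Since the horizontal slab of $\mathcal{S}$ at height $z$ is a rectangle $[0,A(z)]\times[0,B(z)]$ with $A,B$ non-increasing in $z$, every upward tail move automatically stays in $\mathcal{I}$ and every downward head move automatically stays in $\mathcal{S}$. Consequently the only non-trivial constraints along a drag path are horizontal and downward moves for the tail, and horizontal and upward moves for the head. Now suppose the minimum count of upward tail-moves in $P$ is positive, and let $T$ be the tail position just before the first such move. Minimality forces that no horizontal or downward move from $T$ can initiate any drag path to a lower generator; otherwise splicing that alternative into $P$ would strictly reduce the upward-move count. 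On the other hand, because $g'$ has lower $z$-coordinate than $T$, the portion of $P$ after the upward excursion must eventually descend back through the plane $z=T_z$, reaching some tail position $T'$ at height $T_z$. Thus the net horizontal displacement $T\to T'$ has been achieved only by first going up and then coming back down.

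The concluding step is to show that this horizontal displacement can also be performed directly at height $T_z$, contradicting the ``no horizontal move from $T$ works'' conclusion. This is the main obstacle I foresee: at the higher height the tail enjoys a larger $\mathcal{I}$-region (because the slab is smaller there), so the upward detour can in principle exploit positions that lie inside $\mathcal{S}$ back at height $T_z$. I expect the resolution to be that any such ``extra'' $\mathcal{I}$-territory at higher $z$ must, when projected to height $T_z$, lie immediately adjacent to a canonical generator in the $xz$- or $yz$-plane whose $z$-coordinate is strictly less than $T_z$. That generator is itself a lower generator, and reaching the $\mathcal{I}$-cell next to it furnishes a direct drag of $\mathbf{a}$ to a lower generator, once again contradicting minimality. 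The detailed work should reduce to the two-dimensional staircase combinatorics of Section \ref{firstsec} applied separately to the $xz$- and $yz$-slabs of the diagram, closely paralleling the 2D smoothness argument.
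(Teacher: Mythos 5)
There is a genuine gap. Your extremal setup (minimise some measure of upward motion and derive a contradiction by rerouting) is the same kind of argument the paper uses, but you have left the one step that carries all the content --- showing that the horizontal displacement achieved by the upward detour can be replicated without going up --- as an acknowledged ``main obstacle,'' and the resolution you sketch for it does not work as stated. You claim that any extra $\mathcal{I}$-territory available at height $T_z+1$ projects down to cells ``immediately adjacent to a canonical generator in the $xz$- or $yz$-plane whose $z$-coordinate is strictly less than $T_z$.'' But the canonical generator governing the newly available territory at height $T_z+1$ (say $x^{A(T_z+1)}z^{T_z+1}$, where $A(z)$ is the $x$-width of the slab at height $z$) sits \emph{at} height $T_z+1$, above $T_z$, not strictly below it; and even if a generator below were nearby, being able to place the \emph{tail} next to it says nothing about whether the head stays in $\mathcal{S}\cup\mathcal{Z}$ along the way, which is the actual constraint on downward and horizontal moves. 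So the contradiction you need is not established.

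There is also a structural reason your particular extremal choice makes the rerouting harder than it needs to be: between $T$ (just before the first upward move) and $T'$ (the first return to height $T_z$) the path may oscillate arbitrarily, so you have no control over the segment you are trying to replace. The paper instead minimises the \emph{maximum} tail height of the path. At that maximum height the path is purely horizontal, running from a first tail position $m_1$ to a last one $m_2$; both $m_1/z$ and $m_2/z$ lie in $\mathcal{I}$ on the boundary of the rectangular slab one level down, and because that slab is a rectangle (this is where the hypothesis on the generators enters) the arrow can be dragged around the border of that rectangle from $m_1/z$ to $m_2/z$, with the heads staying inside a single rectangular layer. This produces a path of strictly smaller maximum height, the desired contradiction. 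To repair your proof you would either need to switch to this ``minimal height'' extremal quantity, or supply an honest argument that the $T\to T'$ displacement can be realised at height $T_z$ using the rectangularity of the slabs; as written, the lemma is not proved.
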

	\begin{proof}
		Suppose we have such an arrow at a generator $\alpha$ that can be dragged indirectly to a lower generator $\beta$.
		If we call the highest power of $z$ appearing in the tail of any intermediate arrows along the path the height of the path,
		then there exists a path from $\alpha$ to $\beta$ of least height.
		If this height is that of $\alpha$ then we are done.
		If not, we may assume that the tail is dragged up to that height in one direction (assume negative $x$ direction) and down from it in the other (positive $y$),
		as otherwise this part of the path is redundant and the path could be shortened to one unit lower.
		Let $m_1$ and $m_2$ be the first and last values the tail takes
		at this height.
		Then $m_1/(xz)$ and $m_2/(yz)$ appear on the edges of a rectangular layer of the diagram,
		and we may drag the arrow around the border of that rectangle from $m_1/z$ to $m_2/z$ using the fact that the heads of these arrows are in a single rectangular layer.
		This gives us a path of lower height.
	\end{proof}

	Thus for each generator $\alpha$, the morphisms we are counting correspond to arrows downward from $\alpha$ that cannot be dragged any farther down.
	There are two cases to consider: Either $\alpha$ is the only generator at its height, or there is one other generator $\beta$ found on the opposite corner of that layer.
	In the first case, the upward-facing surface of the layer below $\alpha$ (those elements $m$ of $\S$ with power of $z$ one less than $\alpha$ satisfying $zm \in I$)
	is rectangular of $x-$length $\xi$ and $y-$length $\eta$, and arrows can be dragged downward in either the $x$ or $y$ direction.
	Then arrows that cannot be dragged farther down are those for which $x^{\xi}f(\alpha) \in I$ and $y^\eta f(\alpha) \in I$,
	which by the rectangular nature of layers of $\S$ are precisely arrows with heads in the $\xi \times \eta$ rectangle of their layer of $\S$
	found immediately inwards of that layer's far corner, for any layer below $\alpha$.
	This is the same as counting the number of boxes found
	directly below the rectangular surface under $\alpha$.
	
	\begin{figure}[ht]
		\begin{minipage}{0.5\textwidth}
			\centering
			\includegraphics[width=0.8\textwidth]{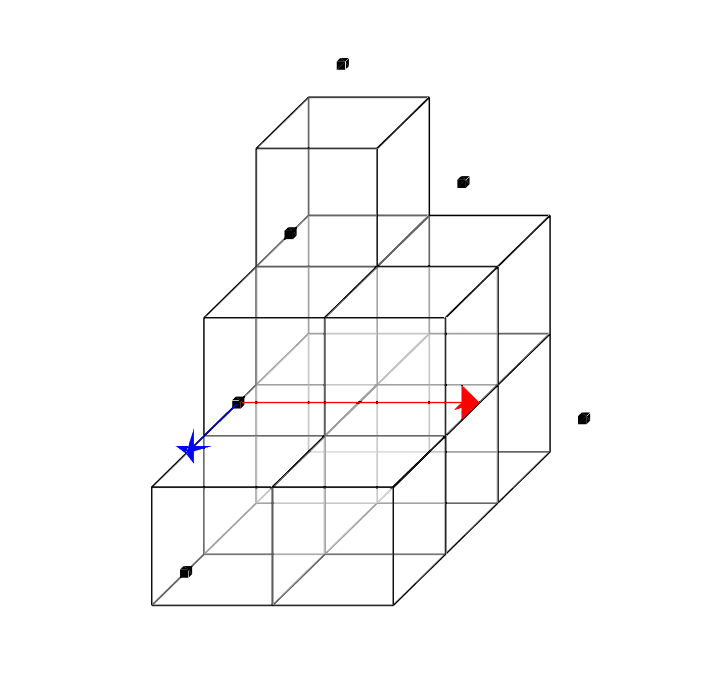}
			\caption{$\xi$ marked in blue, $\eta$ in red}
			\label{fig:topt1}
		\end{minipage}
		\begin{minipage}{0.5\textwidth}
			\centering
			\includegraphics[width=0.8\textwidth]{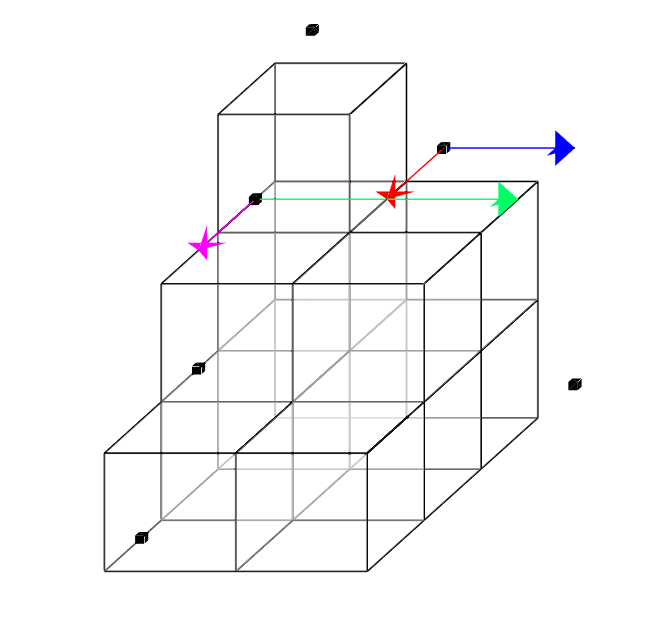}
			\caption{$p$ in pink, $q$ green, $r$ red, $s$ blue}
			\label{fig:topt2}
		\end{minipage}
	\end{figure}
	
	In the second case, assume $\alpha = \alpha_i$ and $\beta=\beta_j$ for some $i,j$.
	To count the morphisms we are interested in from this layer,
	we first count those that take $\alpha$ to $\S$ and then those taking $\alpha$ to zero but $\beta$ to $\S$.
	Let $p$  be the distance in the $x$ direction from $\alpha_i$ to $\alpha_{i+1}$, and $q$  be minimal such that  $y^{q}\alpha/z \in I$,
	let $r$ be the distance in the $x$ direction from $\beta$ to $\alpha$, and let $s$  be the distance in the $y$ direction from $\beta_j$ to $\beta_{j+1}$.
	Then the upward-facing surface of the layer below $\alpha$ and $\beta$ is the disjoint union of a $p \times q$ rectangle and a $r \times s$ one.
	By the same argument as the previous case, arrows taking $\alpha$ to $\S$ that cannot be dragged downwards are those with heads that lie in the
	corner most $p \times q$ rectangle of their layer of $\S$.
	The arrows taking $\alpha$ to zero are arrows from $\beta$ that can be dragged neither downwards nor to have their tail at $\alpha$.
	Arrows for which $x^rf(\beta) \in \S$ can be dragged to $\beta$, so these arrows we count are those with heads that lie in
	the corner most $r \times s$ rectangle of their layer of $\S$.
	Therefore the total number of morphisms counted for $\alpha$ and $\beta$ is
	the number of boxes in the upward-facing surface under $\alpha,\beta$ in each layer below them, which is the number of blocks in $\S$ found directly below this surface. \\
	
	Thus the total count of the morphisms with type three arrows is obtained by counting for every upward-facing surface of $\S$ all the blocks below that surface,
	which will count every block of $\S$ exactly once, giving $d$ morphisms.
	
	Therefore $\dim \Hom(I,R/I)=d+d+d=3d$
\end{proof}

\subsection{Some Examples}
First we look at an example of a point $ I = (x^2,y^2,z^2,xyz) \in \Hilb^{4}(\mathbb{C}^{3})$ that is singular.
Figure \ref{fig:singular} is a diagram with the arrows included.
For a smooth point we would expect that $ \dim \Hom_{R}(I,R/I) = 3 \cdot 4 = 12 $.
But we can clearly count 18 arrows on the diagram, therefore $I$ is not a smooth point.
\begin{figure}[h]
	\begin{minipage}{0.5\textwidth}
		\centering
		\includegraphics[width=\textwidth]{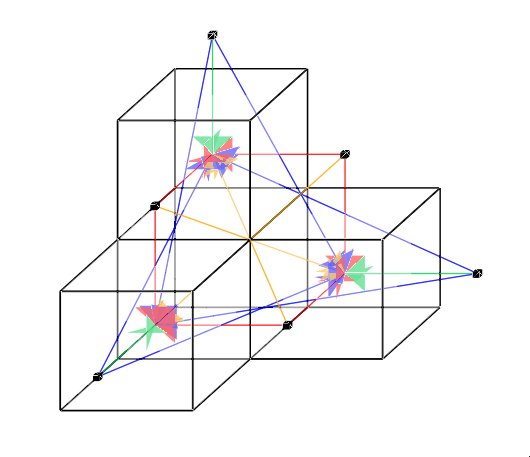}
		\caption{Singular example}
		\label{fig:singular}
	\end{minipage}
	\begin{minipage}{0.5\textwidth}
		\centering
		\includegraphics[width=\textwidth]{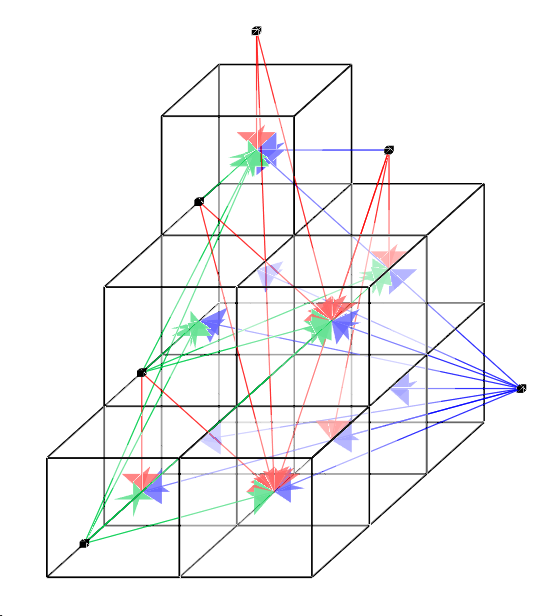}
		\caption{Nonsingular example}
		\label{fig:nonsingular}
	\end{minipage}
\end{figure}

Now we look at an example of a smooth point $ I = (x^3,y^2,x^2z,xz^2,yz^2,z^3) \in  \Hilb^{11}(\mathbb{C}^{3}) $ where we would expect to see $ \dim \Hom_{R}(I,R/I) = 33 $.
Figure \ref{fig:nonsingular} is a diagram with the arrows included.
We can count exactly $33$ arrows, so the point is smooth as expected.

\subsection*{Acknowledgements}

We would like to thank our supervisor - Vladimir Dotsenko - for all his help and assistance during this project. His help made all this possible.


\begin{thebibliography}{99}
	\bibitem{nested_schemes}
	Jan Cheah, \emph{{C}ellular decompositions for nested Hilbert schemes of points},
	Pacific Journal of Mathematics Vol. 183, No. 1, 1998.
	
	\bibitem{schemes}
	Dori Bejleri, \emph{{H}ilbert schemes: geometry, combinatorics, and representation theory}.
\end{thebibliography}
\end{document}